\theoremstyle{plain}
\newtheorem{thm}{Theorem}
\newtheorem*{thm*}{Theorem}
\newtheorem{lem}[thm]{Lemma}
\newtheorem*{lem*}{Lemma}
\newtheorem{cor}[thm]{Corollary}
\newtheorem*{cor*}{Corollary}
\newtheorem*{prop*}{Proposition}
\newtheorem*{ques*}{Question}
\theoremstyle{definition}
\newtheorem*{df*}{Definition}
\newtheorem*{dfs*}{Definitions}
\newtheorem*{exercise*}{Exercise}
\theoremstyle{remark}
\newtheorem*{rem*}{Remark}
\patchcmd{\thmhead}{(#3)}{#3}{}{}
\g@addto@macro\bfseries{\boldmath}
\newcommand{\Z}{\mathbf{Z}} 
\newcommand{\Q}{\mathbf{Q}} 
\newcommand{\F}{\mathbf{F}}
\newcommand{\rBN}{\smash{\overline{\mathrm{BN}}}}
\newcommand{\rKh}{\smash{\overline{\Kh}}}
\DeclareMathOperator{\rank}{rank}
\DeclareMathOperator{\rk}{rk}
\DeclareMathOperator{\Kh}{Kh}
\title{A minimality property for knots without Khovanov 2-torsion}
\author{Onkar Singh Gujral and Joshua Wang}
\date{}
\begin{document}
\maketitle
\begin{abstract}
	A conjecture of Shumakovitch states that every nontrivial knot has 2-torsion in its Khovanov homology. We show that if a knot $K$ has no $2$-torsion in its Khovanov homology, then the rank of its reduced Khovanov homology is minimal among all knots obtainable from $K$ by a proper rational tangle replacement. It follows, for example, that unknotting number 1 knots have $2$-torsion in their Khovanov homology.
\end{abstract}

\vspace{15pt}

Shumakovitch conjectured that every nontrivial knot has an element of order $2$ in its Khovanov homology \cite[Conjecture 1]{MR3205577}. The conjecture has been verified for some infinite families of knots (see for example \cite{MR3205577,MR2147419,MR3205574}) and has withstood large computational searches. In this note, we provide topological evidence for the conjecture, and we verify the conjecture for a large class of knots that include all unknotting number 1 knots.

Two links differ by a \textit{rational tangle replacement} if they agree outside of a ball, and if within the ball, each is a rational tangle. A rational tangle replacement is \textit{proper} if the arcs of the two rational tangles connect the same end points \cite{iltgen2021khovanov,MR4550372}. Changing a crossing is an example of a proper rational tangle replacement, while resolving a crossing is an example of a non-proper rational tangle replacement. In the following statement, $\Kh(K)$ and $\rKh(K)$ denote the unreduced and reduced Khovanov homology groups of $K$, respectively, thought of as abelian groups with bigradings suppressed.

\begin{thm}\label{thm:mainthm}
	Suppose $K$ is a knot such that there is no $2$-torsion in $\Kh(K)$. If $J$ is a knot that differs from $K$ by a proper rational tangle replacement, then \[
		\rank \rKh(K) \leq \rank \rKh(J).
	\]
\end{thm}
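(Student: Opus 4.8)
Since $\Kh(K)$ has no $2$-torsion, universal coefficients gives $\rank\Kh(K)=\dim_{\F_2}\Kh(K;\F_2)$, while Shumakovitch's isomorphism $\Kh(L;\F_2)\cong\rKh(L;\F_2)\otimes\F_2[X]/(X^2)$ gives $\dim_{\F_2}\rKh(K;\F_2)=\tfrac12\rank\Kh(K)$. Combined with $\rank\rKh(K)\le\dim_{\F_2}\rKh(K;\F_2)$ and the unconditional bound $\rank\Kh(K)\le 2\rank\rKh(K)$ from the reduced--unreduced long exact sequence over $\Q$, these inequalities are forced to be equalities, so $\rank\rKh(K)=\tfrac12\rank\Kh(K)$; applying the same exact sequence to $J$ gives $\rank\Kh(J)\le 2\rank\rKh(J)$ for free. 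Hence it suffices to prove the unreduced inequality $\rank\Kh(K)\le\rank\Kh(J)$, still using that $\Kh(K)$ is $2$-torsion free.

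\textbf{Common boundary links.} Write $K=T\cup R_0$ and $J=T\cup R_1$, where $T$ is the fixed exterior tangle and $R_0,R_1$ are rational tangles of the same connectivity. Resolving the crossings inside the ball turns each rational tangle into one of the two crossingless tangles, so $\CKh(K)$ is an iterated mapping cone of shifted copies of $\CKh(L_0)$ and $\CKh(L_\infty)$, where $L_0$ and $L_\infty$ are obtained by inserting the two crossingless tangles into $T$; these two links depend only on $T$, so they are shared by $K$ and $J$, and only the shape of the iterated cone --- governed by the continued fraction of $R_i$, with its parity features controlled by properness --- changes. The cleanest formulation is the explicit description of the rational tangle complex in the Bar-Natan category, as in \cite{iltgen2021khovanov,MR4550372}. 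In the basic case of a crossing change, $\CKh(K)$ and $\CKh(J)$ are the mapping cones of the two saddle chain maps $\CKh(L_0)\to\CKh(L_1)$ and $\CKh(L_1)\to\CKh(L_0)$ --- one a merge, the other a split --- and the skein long exact sequences give, over any field, $\dim\Kh(K)=\dim\Kh(L_0)+\dim\Kh(L_1)-2\rk(\phi_*)$ and $\dim\Kh(J)=\dim\Kh(L_0)+\dim\Kh(L_1)-2\rk(\psi_*)$ for the two maps $\phi_*,\psi_*$ induced on homology.

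\textbf{The rank count.} In the crossing-change case the target becomes $\rk(\psi_{*,\Q})\le\rk(\phi_{*,\Q})$. The no-$2$-torsion hypothesis forces the $\F_2$- and $\Q$-versions of the formula for $\dim\Kh(K)$ to agree, which (since $\dim\Kh(L_i;\F_2)\ge\dim\Kh(L_i;\Q)$ for $i=0,\infty$) constrains $\rk(\phi_{*,\F_2})$ against $\rk(\phi_{*,\Q})$ and the $2$-torsion of $L_0,L_\infty$; one then plays this off the $\F_2$-level vanishing of the appropriate merge--split composite --- which nests the images and kernels of $\phi_{*,\F_2}$ and $\psi_{*,\F_2}$ --- to bound $\rk(\psi_{*,\Q})$. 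For a general rational tangle the single pair of saddle maps is replaced by a spectral sequence from $\bigoplus_i\Kh(L_{\epsilon_i})$ to $\Kh(K)$, and likewise to $\Kh(J)$; the no-$2$-torsion hypothesis should force this spectral sequence to degenerate and control the torsion of the intermediate links, so that the parity condition coming from properness makes the alternating rank count close up in favour of $K$.

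\textbf{Main obstacle.} I expect the crux to be exactly this last point: arranging the cube of resolutions of an arbitrary rational tangle so that properness is precisely the combinatorial input which makes the rank count succeed, and checking that the argument genuinely fails when $\Kh(K)$ has $2$-torsion --- the trefoil, changed to the unknot, being the case to keep in mind, where a saddle map loses rank modulo $2$. A natural way to organise it is induction on the number of twist regions of $R_1$, removing one at a time via the unoriented skein long exact sequence while carrying along the $2$-torsion-freeness of every intermediate link and the agreement of the $\Q$- and $\F_2$-ranks of every saddle map that appears.
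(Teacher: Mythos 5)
Your sketch is a program rather than a proof, and the crux it defers is exactly the content of the theorem. Even in the crossing-change case you never close the argument: the inequality you need, $\rk(\psi_{*,\Q})\le\rk(\phi_{*,\Q})$, is supposed to follow from ``playing off'' the $\F_2$- and $\Q$-ranks against ``the $\F_2$-level vanishing of the appropriate merge--split composite,'' but no such vanishing is available in the setting you have set up. In unreduced Khovanov homology the composite of the two saddle maps $L_0\to L_1\to L_0$ is the tube map, i.e.\ the action of $X\otimes 1+1\otimes X$, which is neither zero over $\F_2$ nor controlled in rank by the hypothesis that $\Kh(K)$ has no $2$-torsion (note also that the hypothesis says nothing about $2$-torsion in $\Kh(L_0)$ or $\Kh(L_1)$, which your rank bookkeeping implicitly needs). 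The general case is worse: the claim that absence of $2$-torsion in $\Kh(K)$ ``should force'' the spectral sequence from $\bigoplus_i\Kh(L_{\epsilon_i})$ to degenerate is unsupported, and your proposed induction on twist regions requires ``carrying along the $2$-torsion-freeness of every intermediate link,'' which is neither given nor implied by the hypothesis on $K$ alone. So the proposal has a genuine gap at its central step.

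The paper's route sidesteps these problems by changing theories rather than iterating skein triangles. One works with reduced Bar-Natan homology $\rBN(\cdot)$ over $\Q[H]$, where the relevant composite of cobordism maps literally becomes multiplication by $H$: the main result of Iltgen--Lewark--Marino produces, for \emph{any} proper rational tangle replacement in one stroke (no induction, no degeneration claims), maps $f\colon\rBN(K)\to\rBN(J)$ and $g\colon\rBN(J)\to\rBN(K)$ with $f\circ g=H$ and $g\circ f=H$. The hypothesis on $2$-torsion enters only through Kotelskiy--Watson--Zibrowius's Proposition 9.3, which converts ``no $2$-torsion in $\Kh(K)$'' into the purely algebraic statement that every $H$-torsion summand $\Q[H]/H^{a_i}$ of $\rBN(K)$ has $a_i\ge 2$, and which also identifies $\rk\rKh$ with $1+2\,(\text{number of torsion summands})$. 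A short module-theoretic lemma about $\F[X]$-modules admitting maps composing to $X$ in both directions then gives the inequality on the number of torsion summands, hence on $\rk\rKh$. If you want to salvage your approach, the missing idea is precisely this passage to the deformed (Bar-Natan) theory, where both the properness of the replacement and the torsion hypothesis acquire clean module-theoretic meanings; staying in undeformed Khovanov homology with $\F_2$ and $\Q$ skein sequences leaves you without any handle on the composite saddle maps.
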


\begin{cor}\label{cor:unknottingNumber1}
	Any knot whose unknotting number is 1 has 2-torsion in its Khovanov homology. 
	More generally, if $K$ is a nontrivial knot that can be obtained from the unknot or a trefoil by a proper rational tangle replacement, then $\Kh(K)$ contains $2$-torsion. 
\end{cor}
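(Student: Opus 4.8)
\emph{Proof strategy.}
The plan is to argue by contradiction, feeding Theorem~\ref{thm:mainthm} into the unknot‑ and trefoil‑detection theorems for Khovanov homology. First I would note that the first assertion is a special case of the second: if $u(K)=1$ then $K$ is obtained from the unknot by a single crossing change, which the introduction records as a proper rational tangle replacement, and such a $K$ is necessarily nontrivial. So it is enough to prove the second assertion. Assume, then, that $K$ is a nontrivial knot obtained from $J_0$ by a proper rational tangle replacement, where $J_0$ is the unknot or a trefoil, and suppose for contradiction that $\Kh(K)$ has no $2$‑torsion.

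Since the inverse of a proper rational tangle replacement is again a proper rational tangle replacement, $J_0$ likewise differs from $K$ by a proper rational tangle replacement. The hypothesis on $\Kh(K)$ is exactly what is needed to apply Theorem~\ref{thm:mainthm} with our $K$ in the role of $K$ and $J_0$ in the role of $J$, which yields
\[
	\rank \rKh(K) \;\leq\; \rank \rKh(J_0).
\]
Because $\rank\rKh$ of the unknot equals $1$ and $\rank\rKh$ of either trefoil equals $3$, this gives $\rank\rKh(K)\leq 3$. On the other hand, the graded Euler characteristic of $\rKh(K)$ is (a normalization of) the Jones polynomial of $K$; evaluating at $q=1$ shows $\rank\rKh(K)\equiv 1 \pmod 2$. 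Hence $\rank\rKh(K)\in\{1,3\}$.

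Now I would invoke the detection theorems. If $\rank\rKh(K)=1$, then $\rKh(K;\Q)$ is one‑dimensional, so $K$ is the unknot by the unknot‑detection theorem of Kronheimer--Mrowka, contradicting nontriviality. If $\rank\rKh(K)=3$, then $K$ is a trefoil by the trefoil‑detection theorem of Baldwin--Sivek; but the unreduced Khovanov homology of a trefoil contains a $\Z/2$ summand (an elementary computation), contradicting our assumption. In either case we reach a contradiction, so $\Kh(K)$ must contain $2$‑torsion. Specializing to $J_0=$ unknot (and using only the Kronheimer--Mrowka theorem) gives the unknotting‑number‑one statement.

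The substantive work is entirely in Theorem~\ref{thm:mainthm}; for the corollary the only points needing attention are the reduction of the unknotting‑number‑one case to the rational‑tangle‑replacement case, the symmetry of the replacement relation (so that Theorem~\ref{thm:mainthm} applies with $K$ in its hypothesis and $J_0$ in its conclusion), the parity of $\rank\rKh(K)$, and invoking the unknot‑ and trefoil‑detection theorems in the form ``reduced Khovanov rank at most $3$ implies the unknot or a trefoil.'' I do not expect a real obstacle beyond correctly citing these inputs.
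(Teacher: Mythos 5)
Your proposal is correct and follows essentially the same route as the paper: apply Theorem~\ref{thm:mainthm} to get $\rank\rKh(K)\leq 3$, use oddness of the rank together with the Kronheimer--Mrowka unknot detection and Baldwin--Sivek trefoil detection theorems, and rule out the trefoil case via its $2$-torsion. The only cosmetic difference is that you phrase it as a contradiction while the paper argues directly that $K$ must be the unknot.
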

\begin{proof}[Proof of Corollary~\ref{cor:unknottingNumber1}]
	Let $J$ be the unknot or a trefoil, and let $K$ be obtained from $J$ by a proper rational tangle replacement. If there is no $2$-torsion in $\Kh(K)$, then $\rank \rKh(K) \leq \rank \rKh(J) \leq 3$ by Theorem~\ref{thm:mainthm}. The rank of $\rKh(K)$ cannot be $3$ since then $K$ would be a trefoil \cite{Baldwin_2022}, which has $2$-torsion in its Khovanov homology. Since the rank of $\rKh(K)$ is odd, it must be 1 so $K$ is the unknot \cite{MR2805599}.
\end{proof}

Our proof of Theorem~\ref{thm:mainthm} combines the main result of Iltgen--Lewark--Marino \cite{iltgen2021khovanov} with an observation of Kotelskiy--Watson--Zibrowius \cite[Proposition 9.3]{kotelskiy2019immersed} using the following lemma. 

\begin{lem}\label{lem:mainlemma}
Let $\F$ be a field, and suppose $M$ and $N$ are finitely-generated modules over the polynomial ring $\F[X]$ of the form \[
	M = (\F[X])^r \oplus \bigoplus_{i=1}^m \frac{\F[X]}{X^{a_i}} \qquad N = (\F[X])^s \oplus \bigoplus_{i=1}^n \frac{\F[X]}{X^{b_i}}
\]where $r,m,s,n \ge 0$ and $a_1,\ldots,a_m,b_1,\ldots,b_n \ge 1$. Furthermore, suppose $f\colon M \to N$ and $g\colon N \to M$ are $\F[X]$-module maps for which $f\circ g = X$ and $g \circ f = X$. If the numbers $a_1,\ldots,a_m$ are all at least $2$, then $m \leq n$.
\end{lem}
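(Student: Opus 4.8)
The plan is to repackage the data $(M,N,f,g)$ as a single module over a larger ring, so that the relations $fg=gf=X$ become structural. Set $P=M\oplus N$ and define $\Phi\in\End_{\F[X]}(P)$ by $\Phi(x,y)=(g(y),f(x))$. Then $\Phi^2(x,y)=(gf(x),fg(y))=(Xx,Xy)$, so $\Phi^2=X$ as operators on $P$. Hence $P$ is a module over $\F[\Phi]$ (with $X$ acting as $\Phi^2$), and the splitting $P=M\oplus N$ turns $P$ into a $\Z/2$-graded $\F[\Phi]$-module in which $\Phi$ has degree $1$, since it interchanges $M=P_0$ and $N=P_1$. Let $T=T(M)\oplus T(N)$ be the $\F[X]$-torsion submodule; it is $\Phi$-invariant because $\Phi$ is $\F[X]$-linear, it is finite-dimensional over $\F$, and $\Phi$ is nilpotent on it because $\Phi^{2k}=X^k$ kills $T$ for $k$ large. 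Also $m$ is the number of cyclic summands of $T(M)$, i.e.\ $m=\dim_\F\ker(X|_{T(M)})$, and likewise $n=\dim_\F\ker(X|_{T(N)})$. Thus the lemma becomes a statement about a finite-dimensional $\Z/2$-graded vector space $T$ carrying a homogeneous degree-$1$ nilpotent endomorphism $\Phi$.

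Next I would apply the graded Jordan normal form: such a pair $(T,\Phi)$ decomposes as a direct sum of graded Jordan blocks $B(c,\epsilon)$, where $B(c,\epsilon)$ has basis $e_0,\dots,e_{c-1}$ with $\deg e_k\equiv\epsilon+k\pmod 2$ and $\Phi e_k=e_{k+1}$ (with $\Phi e_{c-1}=0$). Equivalently, this is the structure theorem for finitely generated torsion modules over the graded PID $\F[\Phi]$; it follows from the usual "split off a largest block" argument, once one checks that a homogeneous vector attaining the nilpotency index exists and that homogeneous elements lift to homogeneous elements modulo an already-split block. Writing $T\cong\bigoplus_j B(c_j,\epsilon_j)$, I then compute each block as an $\F[X]=\F[\Phi^2]$-module: the even-index and odd-index basis vectors of $B(c,\epsilon)$ form two $X$-chains, so $B(c,\epsilon)_0\cong\F[X]/X^{u}$ and $B(c,\epsilon)_1\cong\F[X]/X^{w}$ with $\{u,w\}=\{\lceil c/2\rceil,\lfloor c/2\rfloor\}$ (which of the two occurs in degree $0$ is governed by $\epsilon$; when $c=1$ one of them is zero). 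Taking degree-$0$ and degree-$1$ parts of the decomposition of $T$ gives $T(M)\cong\bigoplus_j\F[X]/X^{u_j}$ and $T(N)\cong\bigoplus_j\F[X]/X^{w_j}$ with $\{u_j,w_j\}=\{\lceil c_j/2\rceil,\lfloor c_j/2\rfloor\}$, so $|u_j-w_j|\le1$ for every $j$. By uniqueness in the structure theorem over $\F[X]$, $\{a_i\}_{i=1}^m=\{u_j:u_j\ge1\}$ and $\{b_i\}_{i=1}^n=\{w_j:w_j\ge1\}$ as multisets; in particular $m=\#\{j:u_j\ge1\}$ and $n=\#\{j:w_j\ge1\}$.

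Finally I would feed in the hypothesis. Because every $a_i\ge2$, no $u_j$ is equal to $1$: each $u_j$ is $0$ or $\ge2$. So for each $j$ with $u_j\ge1$ we in fact have $u_j\ge2$, and then $|u_j-w_j|\le1$ forces $w_j\ge u_j-1\ge1$. Hence $\{j:u_j\ge1\}\subseteq\{j:w_j\ge1\}$, which is exactly $m\le n$. The main obstacle in this scheme is the graded Jordan form of the second step — more precisely, verifying that the cyclic decomposition of $(T,\Phi)$ can be arranged to respect the $\Z/2$-grading; the rest is formal bookkeeping. (The same reformulation, applied to the free parts of $M$ and $N$, also shows $r=s$, and it makes transparent why e.g.\ $M=\F[X]/X^2\oplus\F[X]/X^2$ with $N=0$ cannot occur: each torsion block of $T$ of size $\ge2$ must distribute a nonzero piece to both $M$ and $N$.)
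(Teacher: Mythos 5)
Your argument is correct, but it takes a genuinely different route from the paper. You globalize the data $(M,N,f,g)$ into a single $\Z/2$-graded module $P=M\oplus N$ over $\F[\Phi]$ with $\Phi$ odd and $\Phi^2=X$, restrict to the torsion part, and invoke a graded Jordan (equivalently, graded structure) theorem to split $T$ into alternating blocks; reading off the even and odd parts of each block gives the interlacing $|u_j-w_j|\le 1$ of the torsion exponents of $M$ and $N$, and the hypothesis $a_i\ge 2$ then forces every block contributing to $T(M)$ to also contribute to $T(N)$, whence $m\le n$. The paper instead argues directly and more elementarily: it sets $C=g^{-1}(\ker X_M)$, checks $\ker X_N\subseteq C$ and $X_NC\subseteq\ker X_N$, uses $a_i\ge2$ only to see that every element of $\ker X_M$ lies in $\operatorname{im}X_M=\operatorname{im}(g\circ f)$ so that $g|_C$ surjects onto $\ker X_M$, and concludes by a three-line dimension count $\dim\ker X_M\le\dim C-\dim X_NC=\dim\ker X_N$, identifying $m$ and $n$ with these kernel dimensions. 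The trade-off: your approach carries the burden of verifying that the Jordan decomposition can be made compatible with the $\Z/2$-grading (which you correctly flag as the crux; it is standard, e.g.\ by choosing homogeneous vectors of maximal nilpotency order and homogeneous functionals to produce graded invariant complements), but it buys strictly more than the lemma — the full block-by-block interlacing of the torsion exponents and the equality $r=s$ — whereas the paper's proof is shorter, self-contained, and uses no structure theory beyond the given normal forms of $M$ and $N$.
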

\begin{proof}
Let $X_M$ and $X_N$ denote the structural maps $X\colon M \to M$ and $X\colon N \to N$, respectively. Our aim is to establish the inequality $m = \dim_\F \ker X_M \leq \dim_\F \ker X_N = n$.
Let $C\coloneqq g^{-1}(\ker X_M)$, and observe that \[
    C \supseteq \ker X_N \supseteq X_NC.
\]The first inclusion is straightforward to verify. For the second inclusion, suppose $y \in C$ and observe that $X_N(X_N y) = f(g(X_N y)) = f(X_M g(y)) = 0$. We claim that $g|_C\colon C \to \ker X_M$ is surjective. Since the numbers $a_1,\ldots,a_m$ are all at least two, any element $y$ in the kernel of $X_M$ lies in the image of $X_M$, and therefore may be written as $y = X_Mz = g(f(z))$, which proves the claim. Next, note that $g$ sends $X_NC$ to zero so $g|_C$ gives a surjection $C/X_NC \to \ker X_M$. Thus \[
    \dim_\F \ker X_M \leq \dim_\F C - \dim_\F X_NC = \dim_\F \frac{C}{\ker X_N} + \dim_\F \ker X_N - \dim_\F X_NC = \dim_\F \ker X_N
\]where the last equality follows from the isomorphism $C/\ker X_N \to X_NC$ induced by $X_N$. 
\end{proof}

\begin{proof}[Proof of Theorem~\ref{thm:mainthm}]
Let $\rBN(K)$ denote the reduced Bar-Natan homology of $K$ with rational coefficients. It is a rank $1$ finitely-generated graded module over $\Q[H]$ where $H$ has nonzero degree, so we may write \[
    \rBN(K) \cong \Q[H]\oplus \bigoplus_{i=1}^m \frac{\Q[H]}{H^{a_i}} \qquad \rBN(J) \cong \Q[H] \oplus \bigoplus_{i=1}^n \frac{\Q[H]}{H^{b_i}}
\]where $a_1,\ldots,a_m,b_1,\ldots,b_n$ are positive. By hypothesis, there is no 2-torsion in $\Kh(K)$, so \cite[Proposition 9.3]{kotelskiy2019immersed} implies that the numbers $a_1,\ldots,a_m$ are all at least $2$. Furthermore, \cite[Proof of Proposition 9.3]{kotelskiy2019immersed} also gives $\rk \rKh(K) = 1 + 2m$ and $\rk \rKh(J) = 1 + 2n$.

By \cite[Proof of Theorem 1.1]{iltgen2021khovanov}, there are $\F[H]$-module maps $f\colon \rBN(K) \to \rBN(J)$ and $g\colon \rBN(J) \to \rBN(K)$ satisfying $f\circ g = H$ and $g \circ f = H$. We note that the complex $\llbracket D\rrbracket$ over $\Z[G]$ associated to a diagram $D$ considered in \cite{iltgen2021khovanov} recovers the reduced Bar-Natan complex as $\llbracket D \rrbracket \otimes_{\Z[G]} \Q[H]$ where $\Z[G] \to \Q[H]$ sends $G \mapsto -H$. By Lemma~\ref{lem:mainlemma}, we obtain \[
	\rk \rKh(K) = 1 + 2m \leq 1 + 2n = \rk \rKh(J).\qedhere
\]
\end{proof}

\theoremstyle{definition}
\newtheorem*{ack}{Acknowledgments}
\begin{ack}
	We thank Akram Alishahi, John Baldwin, Artem Kotelskiy, Lukas Lewark, Tom Mrowka, and Raphael Zentner for interesting discussions. OSG thanks his advisor Lisa Piccirillo for useful conversations, support, and advice, and JW thanks Peter Kronheimer for his continued guidance, support, and encouragement. OSG and JW were partially supported by the Simons Collaboration on New Structures in Low-Dimensional Topology, and JW was also partially supported by the NSF MSPRF grant DMS-2303401.
\end{ack}

\bibliographystyle{amsalpha}
\bibliography{2torsion}

\providecommand{\bysame}{\leavevmode\hbox to3em{\hrulefill}\thinspace}
\providecommand{\MR}{\relax\ifhmode\unskip\space\fi MR }
\providecommand{\MRhref}[2]{%
  \href{http://www.ams.org/mathscinet-getitem?mr=#1}{#2}
}
\providecommand{\href}[2]{#2}
\begin{thebibliography}{KWZ19}

\bibitem[AP04]{MR2147419}
Marta~M. Asaeda and J\'{o}zef~H. Przytycki, \emph{Khovanov homology: torsion
  and thickness}, Advances in topological quantum field theory, NATO Sci. Ser.
  II Math. Phys. Chem., vol. 179, Kluwer Acad. Publ., Dordrecht, 2004,
  pp.~135--166. \MR{2147419}

\bibitem[BS22]{Baldwin_2022}
John~A. Baldwin and Steven Sivek, \emph{Khovanov homology detects the
  trefoils}, Duke Mathematical Journal \textbf{171} (2022), no.~4.

\bibitem[ILM21]{iltgen2021khovanov}
Damian Iltgen, Lukas Lewark, and Laura Marino, \emph{Khovanov homology and
  rational unknotting}, arXiv:2110.15107, 2021.

\bibitem[KM11]{MR2805599}
P.~B. Kronheimer and T.~S. Mrowka, \emph{Khovanov homology is an
  unknot-detector}, Publ. Math. Inst. Hautes \'{E}tudes Sci. (2011), no.~113,
  97--208. \MR{2805599}

\bibitem[KWZ19]{kotelskiy2019immersed}
Artem Kotelskiy, Liam Watson, and Claudius Zibrowius, \emph{Immersed curves in
  {K}hovanov homology}, arXiv:1910.14584, 2019.

\bibitem[MZ23]{MR4550372}
Duncan McCoy and Raphael Zentner, \emph{The {M}ontesinos trick for proper
  rational tangle replacement}, Proc. Amer. Math. Soc. \textbf{151} (2023),
  no.~4, 1811--1822. \MR{4550372}

\bibitem[PS14]{MR3205574}
J\'{o}zef~H. Przytycki and Radmila Sazdanovi\'{c}, \emph{Torsion in {K}hovanov
  homology of semi-adequate links}, Fund. Math. \textbf{225} (2014), no.~1,
  277--304. \MR{3205574}

\bibitem[Shu14]{MR3205577}
Alexander~N. Shumakovitch, \emph{Torsion of {K}hovanov homology}, Fund. Math.
  \textbf{225} (2014), no.~1, 343--364. \MR{3205577}

\end{thebibliography}

\vspace{10pt}

\textit{Department of Mathematics, MIT}

\textit{Email:} \texttt{onkar@mit.edu}

\vspace{10pt}

\textit{Department of Mathematics, MIT}

\textit{Email:} \texttt{joshuaxw@mit.edu}

\end{document}